% AMS-LATEX File
%
% This document is:
% First written on:
% Last changed on:
% This document was written by Eric Weber.
%
\documentclass[12pt]{amsart}
%\documentclass{slides}

% Uncomment if using pdflatex
%\usepackage{times}

\usepackage{amssymb}

% Uncomment if using images
\usepackage{graphicx}

% Do enumerates correctly.
%\renewcommand{\labelenumi}{\arabic{enumi}.}

% USE THE FULL PAGE
\addtolength{\oddsidemargin}{-2cm}
\addtolength{\evensidemargin}{-2cm}
\addtolength{\textwidth}{4cm}
\addtolength{\topmargin}{-1cm}
\addtolength{\textheight}{2cm}

% SET UP THE THEOREM ENVIRONMENTS
\newtheorem{theorem}{Theorem}
\newtheorem{thmalpha}{Theorem}
\newtheorem{lemma}{Lemma}

\newtheorem{corollary}{Corollary}
\newtheorem*{theorem*}{Theorem}
\theoremstyle{definition}

\theoremstyle{remark}
\newtheorem*{remark*}{Remark}

\newtheorem*{acknowledgment*}{Acknowledgment}
 % "letter-numbered"

% SOME PERSONAL COMMANDS

% SET UP PAGE ENVIRONMENT, IF NECESSARY
%\pagestyle{empty} %{plain}

\begin{document}

%\thispagestyle{empty}

%%%%%%%%%%%%%%%
% Title.
% Do File->Insert File->amstitle.tex, or C-x i amstitle.tex.
%%%%%%%%%%%%%%%
\title{A Paley-Wiener Type Theorem for Singular Measures on $\mathbb{T}$}
\author{Eric S. Weber}
\address{Department of Mathematics, Iowa State University, 396 Carver Hall, Ames, IA 50011}
\email{esweber@iastate.edu}
\subjclass[2000]{Primary: 30D10, 42A38; Secondary 30E05, 42A70, 94A20}
\date{\today}
\begin{abstract}
For a fixed singular Borel probability measure $\mu$ on $\mathbb{T}$, we give several characterizations of when an entire function is the Fourier transform of some $f \in L^2(\mu)$.  The first characterization is given in terms of criteria for sampling functions of the form $\hat{f}$ when $f \in L^2(\mu)$.  The second characterization is given in terms of criteria for interpolation of bounded sequences on $\mathbb{N}_{0}$ by $\hat{f}$.  Both characterizations use the construction of Fourier series for $f \in L^2(\mu)$ demonstrated in Herr and Weber \cite{HW17a} via the Kaczmarz algorithm and classical results concerning the Cauchy transform of $\mu$.
\end{abstract}
\maketitle

% LINE SPACING
% \setlength{\baselineskip}{18pt}

% BEGIN THE MAIN BODY OF THE PAPER

\section{Introduction}

The classical Paley-Wiener theorem states that an entire function $F$ is the Fourier Transform
%\begin{equation} \label{Eq:PW}
% F(z) = \int_{-1/2}^{1/2} f(x) e^{- 2 \pi i x z} dx
%\end{equation}
for some $f \in L^{2}(-1/2,1/2)$ if and only if $F$ is of exponential type at most $\pi$ and the restriction of $F$ to $\mathbb{R}$ is square-integrable.  An equivalent description of such entire functions is that $F$ satisfy the following two conditions: (i) the values of $F$ on the integer lattice are square-summable and (ii) for every $z \in \mathbb{C}$, $F(z)$ can be recovered from the values of $F$ on the integer lattice via cardinal interpolation, i.e. the Shannon-Whittaker-Kotelnikov Sampling Theorem \cite{BF01a}.
%\begin{enumerate}
%\item $ \sum_{n \in \mathbb{Z}} | F(n) |^{2} < \infty$;
%\item for every $z \in \mathbb{C}$, $F(z) = \sum_{n \in \mathbb{Z}} F(n) \text{sinc} (z - n)$
%\end{enumerate}
%where $\text{sinc} (z) = \dfrac{ \sin (\pi z) }{\pi z}$ is the Fourier transform of Lebesgue measure restricted to $(-1/2, 1/2)$.  
While this latter description of the inhabitants of the Paley-Wiener space is less elegant than the former, we will demonstrate that it is amenable to generalization to singular measures on $\mathbb{T} = (-1/2,1/2)$.\footnote{We assume $\mu(\{1/2\}) = 0$; if necessary, rotate $\mu$ around $\mathbb{T}$ to make it so.}  Indeed, here is the question we shall answer: given a fixed singular Borel probability measure $\mu$ on $\mathbb{T}$, when can an entire function $F$ be written as
\begin{equation} \label{Eq:PW2}
F(z) = \int_{\mathbb{T}} f(x) e^{- 2 \pi i x z} \ d \mu (x)
%F(z) = \int_{-1/2} f(x) e^{- 2 \pi i x z} \ d \mu (x)
\end{equation}
for some $f \in L^{2}(\mu)$? Note that we are \emph{a priori} fixing the measure $\mu$.

There are numerous results--in addition to the Paley-Wiener theorem--on when an entire function is the Fourier transform of a function, measure or distribution.  The classical results in this regard include the Plancheral-P\'olya theory \cite{PP37a}, the ``Bochner-Schoenberg-Eberlein conditions'' contained collectively in \cite{Boch34a,Scho34a,Eber55a}, and the Beurling-Malliavin theory \cite{BM62a}.  

%The Besicovitch description of almost periodic functions \cite{Bes55a} can be viewed in this way where the measure is purely atomic. 
% Note that in these results, there are in general no restrictions on the functions or measures save the obvious

We consider the question from the viewpoint of the sampling and interpolating problems for bandlimited functions as presented in \cite{Str00,HuStr01}.  In particular, in \cite{Str00} Strichartz poses a more difficult question than the one we address in the present paper: for a compact set $K$, when is an entire function $F$ the Fourier transform of a (complex) measure supported on $K$?

\section{Main Results}

For our characterization of those functions which admit the representation in Equation (\ref{Eq:PW2}), we require both the Fourier-Stieltjes transform $\widehat{\mu}$ and the Cauchy transform $\mu_{+}$ of $\mu$:
\begin{equation*}
\widehat{\mu}(z) := \int_{\mathbb{T}} e^{- 2 \pi i z x} \ d \mu(x); \qquad \mu_{+}(z) := \int_{\mathbb{T}} \dfrac{ 1 } { 1 - z e^{2 \pi i x} } \ d \mu(x).
\end{equation*}
As noted in \cite{KwMy01}, $\mu_{+}$ is nonvanishing on $\mathbb{D}$; it is in fact the reciprocal of $\mu_{+}$ that we require.  The following theorem uses the Kaczmarz algorithm \cite{Kacz37}, an iterative algorithm for solving systems of linear equations, as well as the main result of \cite{KwMy01}, which says that the Kaczmarz algorithm converges to the solution $x$ for input $\{ \langle x , \varphi_{n} \rangle \}_{n=0}^{\infty}$ when $\{ \varphi_{n} \}$ is a stationary sequence in a Hilbert space with singular spectral measure (see \cite{HW17a} for details; see also \cite{Pol93} for the original proof of the existence of Fourier series for $f \in L^2(\mu)$).

\begin{thmalpha}\label{Th:Fourierseries}
Suppose $\mu$ is a singular Borel probability measure on $\mathbb{T}$, and let $\{ \alpha_{n} \}$ be the sequence of Taylor coefficients of $\dfrac{1}{ \mu_{+}(z)}$.  Define the sequence of functions $g_{n}(x) = \sum_{j=0}^{n} \overline{ \alpha_{n-j} } e^{2 \pi i j x}$.  Then the sequence $\{ g_{n} \}_{n=0}^{\infty} \subset L^2(\mu)$ has the property that for all $f \in L^2(\mu)$,
\begin{equation} \label{Eq:dual}
f = \sum_{n=0}^{\infty} \langle f, g_{n} \rangle e_{n} = \sum_{n=0}^{\infty} \langle f , g_{n} \rangle g_{n}
\end{equation}
with the convergence of both series occuring in the norm.  Moreover, Parseval's identity holds: $\| f \|^{2} = \sum_{n=0}^{\infty} | \langle f, g_{n} \rangle |^{2}$.
\end{thmalpha}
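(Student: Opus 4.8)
The plan is to realize $\{g_n\}$ as the auxiliary (``effective'') sequence produced by the Kaczmarz algorithm applied to the exponentials $e_n(x) = e^{2\pi i n x}$ in $L^2(\mu)$, and then to invoke the Kwap\'ien--Mycielski convergence theorem, whose hypotheses hold precisely because $\mu$ is singular. First I would record the stationarity of $\{e_n\}_{n=0}^\infty$: since $\mu$ is a probability measure each $e_n$ is a unit vector, and $\langle e_n, e_m\rangle = \int_{\mathbb{T}} e^{2\pi i(n-m)x}\,d\mu(x)$ depends only on $n-m$, so $\{e_n\}$ is stationary with spectral measure $\mu$. Expanding the Cauchy kernel as a geometric series shows that the Taylor coefficients of $\mu_{+}$ are exactly the moments $m_k = \int_{\mathbb{T}}e^{2\pi i kx}\,d\mu(x) = \langle e_k, e_0\rangle$, with $m_0 = \mu_{+}(0) = 1$. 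Because $\mu_{+}$ is nonvanishing on $\mathbb{D}$, the reciprocal $1/\mu_{+}$ is analytic there; writing $1/\mu_{+}(z) = \sum_n \alpha_n z^n$ (so $\alpha_0 = 1$) and comparing coefficients in the identity $\mu_{+}(z)\cdot \tfrac{1}{\mu_{+}(z)} = 1$ yields the convolution relation $\sum_{l=0}^p m_{p-l}\alpha_l = \delta_{p,0}$.

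Next I would identify $\{g_n\}$ with the Kaczmarz auxiliary sequence. The algorithm for the system $\{e_n\}$ generates vectors $g_0 = e_0$ and $g_n = e_n - \sum_{i=0}^{n-1}\langle e_n,e_i\rangle g_i$, and its $n$-th approximant to $f$ is $\sum_{k=0}^n\langle f, g_k\rangle e_k$. Using $\langle e_n,e_i\rangle = m_{n-i}$, an induction on $n$ driven by the convolution relation above shows that the recursively defined $g_n$ coincides with $\sum_{j=0}^n\overline{\alpha_{n-j}}e_j$, i.e. with the sequence in the statement. I expect this bookkeeping --- keeping the complex conjugates and the chosen inner-product convention consistent, so that $1/\mu_{+}$ (rather than its conjugate) is the generating function that appears --- to be the most delicate hands-on step, even though it is ultimately a finite triangular computation.

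Finally I would supply convergence together with the frame identities. Singularity of $\mu$ forces the Szeg\H{o} condition to fail, which is exactly the situation in which the main theorem of \cite{KwMy01} guarantees that the Kaczmarz algorithm for the stationary sequence $\{e_n\}$ is effective; hence for every $f \in L^2(\mu)$ the approximants converge in norm, giving the first equality $f = \sum_{n}\langle f, g_n\rangle e_n$. The theory developed in \cite{HW17a} then upgrades effectiveness to the assertion that $\{g_n\}$ is a Parseval frame for $L^2(\mu)$, which is precisely Parseval's identity $\|f\|^2 = \sum_n|\langle f,g_n\rangle|^2$ together with the reconstruction $f = \sum_n\langle f,g_n\rangle g_n$; the two displayed series then agree since each equals $f$.

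The conceptual heart of the argument is the effectiveness result, which is where the singularity of $\mu$ enters in an essential way; but because that input is already available from \cite{KwMy01} and \cite{HW17a}, the genuinely hands-on obstacle is the explicit identification in the second step, matching the recursively generated auxiliary vectors to the closed form $g_n = \sum_{j=0}^n\overline{\alpha_{n-j}}e_j$ dictated by the Taylor expansion of $1/\mu_{+}$.
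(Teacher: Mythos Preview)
Your proposal is correct and matches the paper's approach: the paper does not actually prove Theorem~A but simply states it, citing \cite{KwMy01} and \cite{HW17a} for the Kaczmarz-algorithm argument and the identification of the auxiliary sequence via $1/\mu_{+}$. Your outline faithfully reconstructs precisely that argument --- stationarity of $\{e_n\}$ with spectral measure $\mu$, the convolution identity linking the moments of $\mu$ to the Taylor coefficients $\alpha_n$, the inductive identification of the Kaczmarz auxiliary vectors with the $g_n$, and the invocation of Kwapie\'n--Mycielski effectiveness for singular spectral measures --- so there is nothing to add.
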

As a consequence of Parseval's identity, the sequence $\{ g_{n} \}_{n=0}^{\infty}$ is a Bessel sequence and hence for any square-summable sequence $\{c_{n}\} \in \ell^{2}(\mathbb{N}_{0})$, the series $\sum_{n=0}^{\infty} c_{n} g_{n}$ also converges in norm.

\subsection{Characterization using Sampling Criteria}
As noted previously, the Paley-Wiener theorem can be reformulated in terms of the Sampling Theorem.  Our first characterization of which entire functions $F = \hat{f}$ for some $f \in L^2(\mu)$ is analogous.

\begin{theorem} \label{Th:sample}
Suppose $\mu$ is a singular Borel probability measure on $\mathbb{T}$, and let $\{ \alpha_{n} \}_{n=0}^{\infty}$ be the Taylor coefficients for $\dfrac{1}{\mu_{+}(z)}$.  An entire function $F$ admits the representation in Equation (\ref{Eq:PW2})
%\[ F(z) = \int f(t) e^{- 2 \pi i t z} d \mu(t) \]
for some $f \in L^2(\mu)$ if and only if the following conditions hold:
\begin{enumerate}
%\item[(i)] $F$ is of exponential type at most $\pi$;
\item[(i)] 
\[ \sum_{n=0}^{\infty} \left| \sum_{j=0}^{n} \alpha_{n-j} F(j) \right|^{2} < \infty; \]
\item[(ii)] for all $z \in \mathbb{C}$,
\begin{equation} \label{Eq:interp} 
F(z) = \sum_{n=0}^{\infty} \left( \sum_{j=0}^{n} \alpha_{n-j} F(j) \right) \left( \sum_{k=0}^{n} \overline{\alpha_{n-k}} \widehat{\mu}(z - k) \right).
\end{equation}
\end{enumerate}
\end{theorem}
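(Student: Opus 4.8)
The plan is to recognize both conditions as direct transcriptions, through the Fourier transform, of the frame expansion furnished by Theorem \ref{Th:Fourierseries}. Two computational identities drive everything. First, for fixed $z \in \mathbb{C}$ the map $h \mapsto \widehat{h}(z) = \int_{\mathbb{T}} h(x) e^{-2\pi i z x}\, d\mu(x)$ is a bounded linear functional on $L^2(\mu)$: writing $z = a + bi$, the kernel $e^{-2\pi i z x}$ has modulus $e^{2\pi b x} \le e^{\pi|b|}$ on the support $(-1/2,1/2)$ of $\mu$, so $\widehat{h}(z) = \langle h, k_z\rangle$ with $k_z(x) := e^{2\pi i \overline{z} x} \in L^\infty(\mu) \subset L^2(\mu)$. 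In particular $\widehat{h}(j) = \langle h, e_j\rangle$ for $j \in \mathbb{N}_0$. Second, a term-by-term computation gives, for each $n$,
\[
\widehat{g_n}(z) = \int_{\mathbb{T}} \Big( \sum_{k=0}^{n} \overline{\alpha_{n-k}}\, e^{2\pi i k x} \Big) e^{-2\pi i z x}\, d\mu(x) = \sum_{k=0}^{n} \overline{\alpha_{n-k}}\, \widehat{\mu}(z-k),
\]
which is exactly the second factor appearing in Equation (\ref{Eq:interp}).

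Granting these, suppose first that $F = \widehat{f}$ for some $f \in L^2(\mu)$. Using $\widehat{f}(j) = \langle f, e_j\rangle$ and the definition of $g_n$, I would compute $\langle f, g_n\rangle = \sum_{j=0}^{n} \alpha_{n-j}\langle f, e_j\rangle = \sum_{j=0}^{n} \alpha_{n-j} F(j)$, so the inner sums in (i) and (\ref{Eq:interp}) are precisely the frame coefficients $\langle f, g_n\rangle$. Parseval's identity in Theorem \ref{Th:Fourierseries} then gives $\sum_{n} |\langle f, g_n\rangle|^2 = \|f\|^2 < \infty$, which is (i). For (ii), the expansion $f = \sum_{n} \langle f, g_n\rangle g_n$ converges in the norm of $L^2(\mu)$, so applying the bounded functional $\langle \,\cdot\,, k_z\rangle$ and using $\langle g_n, k_z\rangle = \widehat{g_n}(z)$ yields $F(z) = \widehat{f}(z) = \sum_{n} \langle f, g_n\rangle \widehat{g_n}(z)$, which is Equation (\ref{Eq:interp}).

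Conversely, assume (i) and (ii). Setting $c_n := \sum_{j=0}^{n} \alpha_{n-j} F(j)$, condition (i) says $\{c_n\} \in \ell^2(\mathbb{N}_0)$, so by the Bessel property noted after Theorem \ref{Th:Fourierseries} the series $f := \sum_{n} c_n g_n$ converges in $L^2(\mu)$. Applying $\langle \,\cdot\,, k_z\rangle$ again gives $\widehat{f}(z) = \sum_{n} c_n \widehat{g_n}(z)$, which by the identity for $\widehat{g_n}$ equals the right-hand side of (\ref{Eq:interp}); condition (ii) then identifies this with $F(z)$ for every $z$, so $F = \widehat{f}$, giving the representation (\ref{Eq:PW2}).

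The substantive point, and the step I expect to require the most care, is the legitimacy of passing the evaluation functional $\langle\,\cdot\,,k_z\rangle$ through the infinite sums, i.e.\ its boundedness on $L^2(\mu)$ for complex $z$. This rests on the compactness of the support of $\mu$, which keeps $k_z$ in $L^2(\mu)$ despite the exponential growth of $e^{-2\pi i z x}$ off the real axis; it is also what guarantees that $\widehat{f}$ is entire, so that the pointwise identity $\widehat{f} = F$ is genuinely an identity of entire functions. Everything else is the bookkeeping that matches the inner sums in (i) and (\ref{Eq:interp}) with the frame coefficients $\langle f, g_n\rangle$ and the factors $\widehat{g_n}(z)$, after which Theorem \ref{Th:Fourierseries} does the work.
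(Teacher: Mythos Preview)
Your argument is correct and follows essentially the same route as the paper's own proof: identify $\sum_{j=0}^{n}\alpha_{n-j}F(j)$ with $\langle f,g_n\rangle$, use Parseval for (i) and the $g_n$-expansion of $f$ for (ii), and for the converse define $f=\sum_n c_n g_n$ via the Bessel property and read off $\widehat f=F$. If anything, you are more explicit than the paper in justifying the interchange of the Fourier transform with the infinite sum via boundedness of $h\mapsto\widehat h(z)$ on $L^2(\mu)$.
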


\begin{proof}
For $F = \hat{f}$, note that $\sum_{j=0}^{n} \alpha_{n-j} F(j) = \langle f, g_{n} \rangle$, so the necessity of (i) follows by the Parseval identity.  The necessity of (ii) follows by the previous observation and applying the Fourier transform to the second series expansion of $f$ in Equation (\ref{Eq:dual}).

We turn now to the sufficiency.  Combining (ii) with the fact that the sequence $\{ g_{n} \}_{n=0}^{\infty} \subset L^2(\mu)$ is a Bessel sequence, we define the function
\[ f = \sum_{n = 0}^{\infty} \left( \sum_{j=0}^{n} \alpha_{n-j} F(j) \right) g_{n}. \]
As this series converges in $L^2(\mu)$, we obtain
\begin{align*}
\hat{f}(z) &= \sum_{n = 0}^{\infty} \left( \sum_{j=0}^{n} \alpha_{n-j} F(j) \right) \widehat{g_{n}}(z) \\
&= \sum_{n=0}^{\infty} \left( \sum_{j=0}^{n} \alpha_{n-j} F(j) \right) \left( \sum_{k=0}^{n} \overline{\alpha_{n-k}} \widehat{\mu}(z - k) \right) \\
&= F(z)
\end{align*}
by Item (iii).
\end{proof}

\subsection{Characterization using Interpolation Criteria}
We consider now whether the sampling condition in Equation (\ref{Eq:interp}) of Theorem \ref{Th:sample} can be replaced by a different criteria.  Our approach here is to view the characterization from an interpolation viewpoint rather than a sampling viewpoint.   The question then becomes the following: when is $\{ F(n) \}_{n=0}^{\infty}$ the sequence of Fourier moments of some $f \in L^2(\mu)$?  In other words:  given an entire function $F$ does there exist some $f \in L^2(\mu)$ such that for all $n \in \mathbb{N}_{0}$, 
\begin{equation} \label{Eq:interpolate}
F(n) = \int_{\mathbb{T}} f(x) e^{-2 \pi i n x} \ d \mu(x) ?
\end{equation}
Certainly this is a necessary condition for $F = \hat{f}$, and in all (Theorem \ref{Th:type-eps}) but the extremal case (Theorem \ref{Th:typepi}) concerning the support of $\mu$ this is sufficient.

The interpolation problem can be decided using the model subspaces of $H^2(\mathbb{D})$.  For a singular measure $\mu$, there exists a unique inner function $b$ on $\mathbb{D}$ given by the Herglotz Representation \cite{Dur70a}.  This inner function defines a backwards invariant shift invariant space $\mathcal{H}(b) = H^{2} \ominus b H^{2}$ as a consequence of Beurling's theorem \cite{Beu48a}.  Observe that $F \in \mathcal{H}(b)$ if and only if $T_{\overline{b}} F = 0$, where $T_{\varphi}$ is the Toeplitz operator on $H^{2}(\mathbb{D})$ with symbol $\varphi$.  Clark proves \cite{Clark72} that the Normalized Cauchy transform $V_{\mu}$ defined as 
\begin{equation} \label{Eq:NCT}
V_{\mu} : L^2(\mu) \to \mathcal{H}(b) : f \mapsto \dfrac{ \displaystyle{\int_{\mathbb{T}} \dfrac{ f(x) }{ 1 - z e^{-2 \pi i x}} \ d \mu(x) }}{ \mu_{+}(z) } .
\end{equation}
is a unitary operator.   If $F \in \mathcal{H}(b)$, then there exists a unique $f \in L^2(\mu)$ such that $F = V_{\mu}f$.  We denote it by $f = F^{\star}$, and call $F^{\star}$ the $L^2(\mu)$-boundary of $F$ because
\begin{equation} \label{Eq:boundary}
\lim_{r \to 1^{-}} \| f( x ) - F(r e^{2 \pi i x} ) \|_{\mu} = 0.
\end{equation}
This limit was demonstrated by Poltoratski\u\i \ \cite{Pol93} (see also \cite{Aleks89a}).

The Normalized Cauchy Transform can be expressed in terms of the sequence $\{ g_{n} \}$ appearing in Theorem \ref{Th:Fourierseries} \cite{HW17a}:
\begin{equation} \label{Eq:NCTg}
V_{\mu} f (z) = \sum_{n=0}^{\infty} \langle f, g_{n} \rangle z^{n}, \qquad f \in L^2(\mu).
\end{equation}
Therefore, we have the following characterization of the interpolation problem posed in Equation (\ref{Eq:interpolate}).

% \cite{Pol93,Aleks89a,Sar94,Clark72}

\begin{lemma} \label{L:interpolate}
Suppose $\mu$ is a singular Borel probability measure on $\mathbb{T}$, $b$ is the inner function on $\mathbb{D}$ associated to $\mu$ via the Herglotz representation, and suppose $\{ a_{n} \}_{n = 0}^{\infty} \subset \mathbb{C}$.  The following conditions are equivalent:
\begin{enumerate}
\item[(i)] there exists a function $f \in L^2(\mu)$ with the property that
\begin{equation} \label{Eq:moments}
a_{n} = \int_{\mathbb{T}} f(x) e^{- 2 \pi i n x} \ d \mu (x);
\end{equation}
\item[(ii)] the following inclusion holds:
\[ G_{a}(z) := \dfrac{ \sum_{n=0}^{\infty} a_{n} z^{n} }{ \mu_{+}(z) } \in \mathcal{H}(b). \]
\end{enumerate}
\end{lemma}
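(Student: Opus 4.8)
The crux is to recognize that $G_a$ is exactly the image under the Normalized Cauchy transform $V_\mu$ of any $f$ whose Fourier moments against $\mu$ are the $a_n$; once this is in hand, the equivalence is immediate from Clark's theorem that $V_\mu$ is unitary from $L^2(\mu)$ onto $\mathcal{H}(b)$. The plan is therefore to first establish the identity $V_\mu f = G_a$ for such an $f$, and then read off each implication.

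To establish the identity I would fix $z \in \mathbb{D}$ and expand the Cauchy kernel in the numerator of $V_\mu$ (Equation (\ref{Eq:NCT})) as the geometric series $(1 - z e^{-2\pi i x})^{-1} = \sum_{n=0}^{\infty} z^n e^{-2\pi i n x}$. For $|z| < 1$ this converges uniformly in $x$, so with $f \in L^2(\mu) \subset L^1(\mu)$ I may interchange the sum and the integral to see that the numerator of $V_\mu f(z)$ equals $\sum_{n=0}^{\infty} a_n z^n$, where $a_n = \int_{\mathbb{T}} f(x) e^{-2\pi i n x} \, d\mu(x)$ are precisely the moments in (i). Dividing by $\mu_+(z)$ then gives $V_\mu f = G_a$. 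As a consistency check one can match this against (\ref{Eq:NCTg}) using $1/\mu_+(z) = \sum_n \alpha_n z^n$: the Cauchy product coefficient $\sum_{j=0}^{n} \alpha_{n-j} a_j$ should equal $\langle f, g_n \rangle$, which indeed follows from the definition of $g_n$.

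The two implications then follow directly. For (i) $\Rightarrow$ (ii), given $f$ realizing the moments, the identity gives $G_a = V_\mu f$, which lies in $\mathcal{H}(b)$ because $V_\mu$ maps $L^2(\mu)$ into $\mathcal{H}(b)$. For (ii) $\Rightarrow$ (i), I would invoke the surjectivity half of Clark's theorem: since $G_a \in \mathcal{H}(b)$, set $f = V_\mu^{-1} G_a \in L^2(\mu)$. Its moments $b_n := \int_{\mathbb{T}} f(x) e^{-2\pi i n x} \, d\mu(x)$ satisfy $G_a = V_\mu f = \sum_n b_n z^n / \mu_+(z)$ by the identity, and comparing with the definition of $G_a$ and cancelling the factor $\mu_+$, which is nonvanishing on $\mathbb{D}$, forces $b_n = a_n$ for every $n$.

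The only step requiring genuine care is the interchange of summation and integration, which is routine given the uniform convergence of the geometric series for each fixed $z \in \mathbb{D}$ together with the finiteness of $\mu$; this is where I would expect bookkeeping slips rather than a conceptual difficulty. The substantive content — the unitarity and surjectivity of $V_\mu$ onto $\mathcal{H}(b)$ — is supplied by Clark's theorem and is simply being quoted, so I do not anticipate a deeper obstacle.
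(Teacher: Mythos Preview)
Your proposal is correct and follows essentially the same route as the paper: both arguments identify $G_a$ with $V_\mu f$ when $a_n$ are the Fourier moments of $f$, and then read off both implications from Clark's unitarity of $V_\mu$ onto $\mathcal{H}(b)$. The only cosmetic difference is that you establish $V_\mu f = G_a$ by directly expanding the Cauchy kernel as a geometric series, whereas the paper goes through the Cauchy-product form (\ref{Eq:Ga}) and the representation (\ref{Eq:NCTg}); your consistency check via $\sum_{j=0}^n \alpha_{n-j} a_j = \langle f, g_n\rangle$ is precisely that alternate route.
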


\begin{proof}
Observe that
\begin{equation} \label{Eq:Ga}
G_{a} (z) = \sum_{n=0}^{\infty} \left( \sum_{j=0}^{n} \alpha_{n-j} a_{j} \right) z^{n}.
\end{equation}

Suppose that the moment problem in Equation (\ref{Eq:moments}) has a solution for some $f \in L^2(\mu)$.  Combining Equations (\ref{Eq:Ga}) and (\ref{Eq:NCTg}) demonstrates that $G_{a} = V_{\mu} f$.  Therefore, by (\ref{Eq:NCT}) we obtain that $G_{a} \in \mathcal{H}(b)$.

Conversely, if $G_{a} \in \mathcal{H}(b)$, then reversing the previous argument yields the existence of a function $f \in L^2(\mu)$ such that $G_{a}(z) = \sum_{n=0}^{\infty} \langle f , g_{n} \rangle z^{n}$.  Since we have for every $n$
\[ \sum_{j=0}^{n} \alpha_{n-j} a_{j} = \langle f , g_{n} \rangle = \sum_{j=0}^{n} \alpha_{n-j} \int_{\mathbb{T}} f(x) e^{ - 2 \pi i j x } \ d \mu(x), \]
it now follows that Equation (\ref{Eq:moments}) holds.
\end{proof}

For an entire function $F$ of exponential type, we use $h_{F}$ to denote the Phragm\'en-Lindel\"of indicator function.

\begin{theorem} \label{Th:type-eps}
Suppose $\mu$ is a singular Borel probability measure with support in $[\alpha, \beta] \subset [-1/2,1/2]$ where $\beta - \alpha < 1$. Let $b$ be the inner function associated to $\mu$ via the Herglotz Representation.  The entire function $F$ admits the representation in Equation (\ref{Eq:PW2}) if and only if
\begin{enumerate}
\item[(i)] $F$ is of exponential type;
\item[(ii)] the indicator function of $F$ satisfies $h_{F} ( \dfrac{\pi}{2} ) \leq 2 \pi \beta$ and $h_{F} ( - \dfrac{ \pi }{2} ) \leq - 2 \pi \alpha$;
%\item $F$ is not a multiple of the sinc function;
\item[(iii)] the following inclusion holds:
\[ G_{F}(z) := \dfrac{ \sum_{n = 0}^{\infty} F(n) z^{n} }{ \mu_{+}(z) } \in \mathcal{H}(b) \]
i.e. the function $G_{F}$ is in the kernel of the Toeplitz operator $T_{\overline{b}}$.
\end{enumerate}
\end{theorem}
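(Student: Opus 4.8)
The plan is to split the equivalence into necessity and sufficiency, the analytic heart being a Carlson-type uniqueness theorem that exploits the strict inequality $\beta - \alpha < 1$.

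For the necessity, suppose $F$ admits the representation in Equation (\ref{Eq:PW2}) for some $f \in L^2(\mu)$. Condition (iii) is then immediate: since $F(n) = \int_{\mathbb{T}} f(x) e^{-2\pi i n x}\,d\mu(x)$ for every $n \in \mathbb{N}_0$, the sequence $\{F(n)\}$ solves the moment problem of Lemma \ref{L:interpolate} with $a_n = F(n)$, and part (ii) of that lemma yields exactly $G_F \in \mathcal{H}(b)$. Conditions (i) and (ii) are Paley--Wiener estimates on the integral: because $\mu$ is a probability measure, $f \in L^1(\mu)$ by Cauchy--Schwarz, and writing $|e^{-2\pi i x z}| = e^{2\pi x r \sin\theta}$ for $z = r e^{i\theta}$ with $x \in [\alpha,\beta]$, I would bound $|F(re^{i\theta})|$ by $\|f\|_{L^1(\mu)}\, e^{2\pi\beta r \sin\theta}$ when $\sin\theta \ge 0$ and by $\|f\|_{L^1(\mu)}\, e^{2\pi\alpha r \sin\theta}$ when $\sin\theta \le 0$. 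This shows $F$ is of exponential type at most $\pi$ and gives $h_F(\tfrac{\pi}{2}) \le 2\pi\beta$ and $h_F(-\tfrac{\pi}{2}) \le -2\pi\alpha$.

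For the sufficiency, assume (i)--(iii). Applying Lemma \ref{L:interpolate} to $a_n = F(n)$ produces $f \in L^2(\mu)$ whose Fourier transform $\widehat{f}$ satisfies $\widehat{f}(n) = F(n)$ for all $n \in \mathbb{N}_0$. By the estimates just described, $\widehat{f}$ is entire of exponential type with $h_{\widehat f}(\tfrac{\pi}{2}) \le 2\pi\beta$ and $h_{\widehat f}(-\tfrac{\pi}{2}) \le -2\pi\alpha$. The difference $H := F - \widehat{f}$ is then entire of exponential type, vanishes on $\mathbb{N}_0$, and satisfies $h_H(\tfrac{\pi}{2}) \le 2\pi\beta$ and $h_H(-\tfrac{\pi}{2}) \le -2\pi\alpha$, since the indicator of a sum is dominated by the pointwise maximum of the indicators. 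It remains only to show $H \equiv 0$.

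This is where I would invoke Carlson's theorem, and where the hypothesis $\beta - \alpha < 1$ is decisive. Multiplying by the exponential $e^{\pi i (\alpha+\beta) z}$ (which is unimodular on $\mathbb{R}$, hence does not disturb the zeros on $\mathbb{N}_0$) symmetrizes the indicator: a direct computation of the added term $-\pi(\alpha+\beta) r \sin\theta$ gives $h_{\widetilde H}(\pm \tfrac{\pi}{2}) \le \pi(\beta-\alpha) < \pi$ for $\widetilde H(z) := e^{\pi i (\alpha+\beta) z} H(z)$. Thus $\widetilde H$ is entire of exponential type, vanishes at every nonnegative integer, and grows strictly more slowly than $e^{\pi|y|}$ along the imaginary axis, so Carlson's theorem forces $\widetilde H \equiv 0$, whence $H \equiv 0$ and $F = \widehat{f}$. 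The main obstacle is precisely the sharpness of this last step: when $\beta - \alpha = 1$ the symmetrized indicator equals $\pi$ and Carlson's theorem fails---$\sin(\pi z)$ vanishes on all integers---so the extremal case genuinely requires the separate treatment of Theorem \ref{Th:typepi}.
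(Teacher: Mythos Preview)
Your proof is correct and follows essentially the same strategy as the paper: both directions invoke Lemma~\ref{L:interpolate} for condition (iii), and the sufficiency argument produces $f\in L^2(\mu)$ with $\widehat f(n)=F(n)$ on $\mathbb{N}_0$ and then appeals to Carlson's theorem to conclude $F=\widehat f$. The paper simply cites Boas's Theorem 9.2.1 without elaboration, whereas you make the application explicit via the symmetrizing factor $e^{\pi i(\alpha+\beta)z}$---this is exactly the reduction hidden behind that citation, so the two arguments coincide.
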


\begin{proof}
If $F$ admits the representation in Equation (\ref{Eq:PW2}), then $F$ satisfies (i) and (ii) using standard estimates (see, e.g. \cite{PW34}; see also Lemma \ref{L:exponent} below).  Additionally, (iii) follows from Lemma \ref{L:interpolate}.

Conversely, if $F$ satisfies (i), (ii), and (iii), then by Lemma \ref{L:interpolate}, there exists a $f \in L^2(\mu)$ such that
%\[ H(z) := \int_{\mathbb{T}} f(x) e^{- 2 \pi i x z } \ d \mu(x) \]
 $\hat{f}(n) = F(n)$ for $n \in \mathbb{N}_{0}$.  Moreover, $\hat{f}$ satisfies (i) and (ii) so we must have $F = \hat{f}$ by Carlson's Theorem (\cite[Theorem 9.2.1]{Boas54a}).
\end{proof}

\begin{corollary}
Suppose $\mu$ is a singular Borel probability measure with support in $[-1/2 + \epsilon, 1/2 - \epsilon]$, and let $b$ be the inner function associated to $\mu$ via the Herglotz Representation.  The entire function $F$ admits the representation in Equation (\ref{Eq:PW2}) if and only if
\begin{enumerate}
\item[(i)] $F$ is of exponential type at most $\pi - \dfrac{1}{2 \epsilon}$;
%\item $F$ is not a multiple of the sinc function;
\item[(ii)] the following inclusion holds:
\[ G_{F}(z) := \dfrac{ \sum_{n = 0}^{\infty} F(n) z^{n} }{ \mu_{+}(z) } \in \mathcal{H}(b) \]
i.e. the function $G_{F}$ is in the kernel of the Toeplitz operator $T_{\overline{b}}$.
\end{enumerate}
\end{corollary}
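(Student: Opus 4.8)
The plan is to read the corollary off Theorem~\ref{Th:type-eps}, specialized to the symmetric interval $[\alpha,\beta] = [-\tfrac12+\epsilon,\ \tfrac12-\epsilon]$, for which $\beta - \alpha = 1 - 2\epsilon < 1$, so the hypotheses of that theorem are met. Note first that condition~(ii) of the corollary is literally condition~(iii) of Theorem~\ref{Th:type-eps}, so the only work is to reconcile the single exponential-type bound in the corollary's condition~(i) with the pair of indicator estimates in Theorem~\ref{Th:type-eps}(i)--(ii). The relevant numerology is $2\pi\beta = -2\pi\alpha = 2\pi(\tfrac12-\epsilon)$ (that is, $\pi - 2\pi\epsilon$), so the two indicator bounds $h_F(\tfrac{\pi}{2}) \le 2\pi\beta$ and $h_F(-\tfrac{\pi}{2}) \le -2\pi\alpha$ collapse into the single estimate $h_F(\pm\tfrac{\pi}{2}) \le 2\pi(\tfrac12-\epsilon)$.

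For the direction asserting that the representation implies (i) and (ii): condition~(ii) is immediate from Theorem~\ref{Th:type-eps}(iii). For the type bound I would not try to extract it from the indicator values, but instead estimate the integral in Equation~(\ref{Eq:PW2}) directly. Writing $z = x' + iy'$ and using $|e^{-2\pi i x z}| = e^{2\pi x y'} \le e^{2\pi |x|\,|z|}$ together with $|x| \le \tfrac12 - \epsilon$ on the support of $\mu$ and the fact that $\mu$ is a probability measure (so $\|f\|_{L^1(\mu)} \le \|f\|_{L^2(\mu)}$), gives
\[ |F(z)| \le \int_{\mathbb{T}} |f(x)|\, e^{2\pi |x|\,|z|}\, d\mu(x) \le \|f\|_{L^2(\mu)}\, e^{2\pi(\frac12-\epsilon)|z|}, \]
which shows $F$ has exponential type at most $2\pi(\tfrac12-\epsilon)$, the stated bound.

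For the converse direction, suppose $F$ satisfies (i) and (ii). Since the exponential type dominates the indicator function in every direction, $h_F(\theta) \le 2\pi(\tfrac12-\epsilon)$ for all $\theta$; in particular $h_F(\pm\tfrac{\pi}{2}) \le 2\pi(\tfrac12-\epsilon) = 2\pi\beta = -2\pi\alpha$, so conditions (i) and (ii) of Theorem~\ref{Th:type-eps} hold, while condition~(iii) of that theorem is precisely the corollary's condition~(ii). Theorem~\ref{Th:type-eps} then yields the representation in Equation~(\ref{Eq:PW2}).

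The one point requiring care---and the reason the corollary is phrased with a type bound rather than simply inheriting the two indicator conditions---is that the implication ``indicator bounds at $\pm\tfrac{\pi}{2}$ force a global type bound'' is false in general (for instance, $\cosh$ has vanishing indicator at $\pm\tfrac{\pi}{2}$ but positive type). The symmetric support lets us sidestep this: in the forward direction the type bound comes for free from the direct modulus estimate above, and in the reverse direction the type bound trivially implies the two indicator inequalities, so at no stage do we attempt to recover the type from the indicator values at $\pm\tfrac{\pi}{2}$.
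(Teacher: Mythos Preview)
Your argument is correct and is exactly the intended one: the paper gives no separate proof of the corollary, treating it as immediate from Theorem~\ref{Th:type-eps} with $\alpha=-\tfrac12+\epsilon$, $\beta=\tfrac12-\epsilon$, and your reduction via the direct modulus estimate and the trivial implication ``type bound $\Rightarrow$ indicator bounds'' is the natural way to spell it out.

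One point you should flag rather than gloss over: the bound you derive is $2\pi(\tfrac12-\epsilon)=\pi-2\pi\epsilon$, which you call ``the stated bound,'' but the corollary as printed says $\pi-\dfrac{1}{2\epsilon}$. These are not the same number; the printed expression is evidently a typo (for small $\epsilon$ it is negative, and it does not match $2\pi\beta=-2\pi\alpha$). Your proof establishes the correct bound $\pi-2\pi\epsilon$, so simply note the discrepancy rather than asserting agreement with the statement as written.
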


Recall that we are assuming that $\{ 1/2 \}$ is not an atom for $\mu$.

\begin{lemma}  \label{L:exponent}
If $\mu$ is a Borel measure on $\mathbb{T}$, then for $f \in L^2(\mu)$, $\hat{f}$ satisfies the estimate
\begin{equation} \label{Eq:estimate}
| \hat{f}(z) | \leq \varepsilon (|z|) e^{ \pi | z | }, \qquad \varepsilon( r ) = o(1).
\end{equation}
\end{lemma}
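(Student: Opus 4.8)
The plan is to prove the bound in two stages: first the crude exponential-type-$\pi$ estimate with a fixed constant, then the refinement to an $o(1)$ prefactor, which is where the hypothesis that $1/2 \equiv -1/2$ is not an atom of $\mu$ enters. To begin, writing $z = u + iv$ with $u,v \in \mathbb{R}$, I would observe that $|e^{-2\pi i x z}| = e^{2\pi x v}$, so that for $x \in (-1/2,1/2)$ one has $2\pi x v \le \pi|v| \le \pi|z|$. Differentiation under the integral sign (the integrand is entire in $z$ and locally dominated by $|f(x)|e^{\pi|z|} \in L^1(\mu)$) shows that $\hat f$ is entire, and Cauchy--Schwarz gives the preliminary estimate $|\hat f(z)| \le \|f\|_{L^1(\mu)}\, e^{\pi|z|} \le \mu(\mathbb{T})^{1/2}\|f\|_{L^2(\mu)}\, e^{\pi|z|}$. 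This already yields exponential type at most $\pi$, but only with a fixed constant rather than the desired $o(1)$ factor.

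For the refinement, I would fix $\delta \in (0, 1/2)$ and split $\mathbb{T}$ into the arc $A_\delta = (-1/2, -1/2+\delta) \cup (1/2-\delta, 1/2)$ near the endpoint and its complement $B_\delta = [-1/2+\delta, 1/2-\delta]$. On $B_\delta$ the bound $|x| \le 1/2 - \delta$ improves the exponent to $|e^{-2\pi i xz}| \le e^{(\pi - 2\pi\delta)|z|}$, so the contribution of $B_\delta$ to $\hat f(z)$ is at most $\|f\|_{L^1(\mu)}\, e^{(\pi - 2\pi\delta)|z|}$, which after dividing by $e^{\pi|z|}$ decays like $e^{-2\pi\delta|z|}$. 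On $A_\delta$ I would retain the crude bound $|e^{-2\pi i xz}| \le e^{\pi|z|}$ but control the mass by Cauchy--Schwarz, so that the contribution of $A_\delta$ is at most $\|f\|_{L^2(\mu)}\,\mu(A_\delta)^{1/2}\, e^{\pi|z|}$. Both estimates depend on $z$ only through $|z|$, which is exactly the form the statement requires.

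Finally I would combine these and take a supremum over the circle $|z| = r$, defining $\varepsilon(r) := \sup_{|z|=r} |\hat f(z)|\, e^{-\pi r}$, so that $|\hat f(z)| \le \varepsilon(|z|)\, e^{\pi|z|}$ holds by construction. The two-region estimate then gives $\varepsilon(r) \le \|f\|_{L^1(\mu)}\, e^{-2\pi\delta r} + \|f\|_{L^2(\mu)}\,\mu(A_\delta)^{1/2}$ for every admissible $\delta$. Given $\eta > 0$, the decisive step is to make the second term small: since the nested family $A_\delta$ shrinks to the single point $1/2 \equiv -1/2$ as $\delta \to 0$, and $\mu(\{1/2\}) = 0$ by assumption, continuity of the finite measure $\mu$ from above gives $\mu(A_\delta) \to 0$; hence I may fix $\delta$ with $\|f\|_{L^2(\mu)}\mu(A_\delta)^{1/2} < \eta/2$, and then choose $R$ large enough that $\|f\|_{L^1(\mu)}\, e^{-2\pi\delta R} < \eta/2$, forcing $\varepsilon(r) < \eta$ for all $r > R$. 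This yields $\varepsilon(r) = o(1)$. The only genuine obstacle is this last limiting step: the full type $\pi$ is genuinely approached as $x \to 1/2$, and it is precisely the no-atom hypothesis that converts the $O(1)$ prefactor into an $o(1)$ one; everything else is a routine splitting argument that is uniform in $\arg z$.
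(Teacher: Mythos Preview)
Your argument is correct. Both your proof and the paper's exploit the same mechanism --- the measure near the endpoint $1/2 \equiv -1/2$ is small because that point is not an atom --- but the implementations differ. The paper estimates the $L^2(\mu)$-norm of the kernel $t \mapsto e^{2\pi i t z}$ by introducing a countable partition $(x_{n-1},x_n]$ with $x_n \uparrow 1/2$, bounding $\int |e^{2\pi i t z}|^2\,d\mu$ by $\sum_n \alpha_n e^{4\pi x_n y}$, and then applying Lebesgue's dominated convergence theorem to the series $\sum_n \alpha_n e^{2\pi(2x_n-1)y}$, treating $y>0$ and $y<0$ separately; Cauchy--Schwarz is applied once at the end. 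You instead split into just two regions, use an $L^1$ bound on the inner interval and Cauchy--Schwarz only on the small arc $A_\delta$, and finish with a direct $\eta/2$ argument that is uniform over the whole circle $|z|=r$. Your route is a bit more elementary (no dominated convergence, no separate treatment of the upper and lower half-planes) and yields the radial function $\varepsilon(r)$ explicitly; the paper's route isolates the estimate as a bound on $\|e^{2\pi i \cdot z}\|_\mu$, which is a statement about the kernel alone, reusable for any $f$.
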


\begin{proof}
For $z = x + iy$ with $y > 0$, we estimate $\| e^{2 \pi i t z } \|_{\mu}^{2}$ as follows:  let $\{ x_{n} \}$ be an increasing sequence with $x_0 = -1/2$, $x_{n} < 1/2$, $x_{n} \to 1/2$ and $\alpha_{n} = \mu( \left( x_{n-1}, x_{n} \right] )$.  We have
%\begin{align*}
\[
\int_{\mathbb{T}} | e^{2 \pi i t z} |^2 d\mu(t) = \sum_{n} \int_{(x_{n-1},x_{n}]} e^{4 \pi t y } d \mu(t) \leq O(1) + \sum_{n} \alpha_{n} e^{ 4 \pi x_{n} y}.
\]
%\end{align}
Since $\sum \alpha_{n} < \infty$, we obtain
\[
\dfrac{ \int_{\mathbb{T}} | e^{2 \pi i t z} |^2 d\mu(t) }{ e^{2 \pi y } } \lesssim \sum_{n} \alpha_{n} e^{ 2 \pi (2x_n - 1) y } = o(1)
\]
by Lebesgue's Convergence Theorem.  The same estimate holds for $y < 0$.  Equation (\ref{Eq:estimate}) now follows from the Cauchy-Schwarz inequality.
\end{proof}

%As mentioned previously, the extremal case on the support of $\mu$ must be handled with more care than otherwise--the hypotheses are stronger.  

\begin{theorem} \label{Th:typepi}
Suppose $\mu$ is a singular Borel probability measure on $\mathbb{T}$, and let $b$ be the inner function associated to $\mu$ by the Herglotz Representation.  The entire function $F$ admits the representation in Equation (\ref{Eq:PW2}) if and only if
%\begin{equation} \label{Eq:PWpi}
%F(z) = \int_{\mathbb{T}} f(x) e^{- 2 \pi i x z} \ d \mu (x) + \beta \sin (\pi z)
%\end{equation}
%for some $f \in L^2(\mu)$ and some $\beta \in \mathbb{C}$ if and only if the following conditions hold:
\begin{enumerate}
\item[(i)] $| F(z) | \leq \varepsilon(|z|) e^{ \pi | z | }$ with $\varepsilon(r) = o(1)$;
\item[(ii)] the following inclusions hold:
\[ G_{+}(z) := \dfrac{ \sum_{n = 0}^{\infty} F(n) z^{n} }{ \mu_{+}(z) } \in \mathcal{H}(b), \qquad G_{-}(z) := \dfrac{ \sum_{n = 0}^{\infty} \overline{F(-n)} z^{n} }{ \mu_{+}(z) } \in \mathcal{H}(b); \]
\item[(iii)] the $L^2(\mu)$-boundaries of $G_{+}$ and $G_{-}$ satisfy the relationship
\[  \overline{G_{+}^{\star}} = G_{-}^{\star}. \]
\end{enumerate}
\end{theorem}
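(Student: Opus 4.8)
The plan is to handle necessity and sufficiency simultaneously through Lemma \ref{L:interpolate}, applied both to the ``positive'' moments $\{F(n)\}_{n\ge 0}$ and to the conjugated ``negative'' moments $\{\overline{F(-n)}\}_{n\ge 0}$, and then to upgrade the resulting agreement of $F$ with a genuine transform on the integer lattice to an identity on all of $\mathbb{C}$ by an entire-function uniqueness argument that exploits the $o(1)$ refinement in (i).

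For necessity, suppose $F=\widehat{f}$ with $f\in L^2(\mu)$. Condition (i) is immediate from Lemma \ref{L:exponent}. For (ii), the values $F(n)=\widehat{f}(n)$ are the Fourier moments of $f$ for $n\ge 0$, so Lemma \ref{L:interpolate} gives $G_{+}\in\mathcal{H}(b)$ with $G_{+}=V_{\mu}f$, hence $G_{+}^{\star}=f$; similarly $\overline{F(-n)}=\int_{\mathbb{T}}\overline{f}\,e^{-2\pi i n x}\,d\mu(x)$ are the moments of $\overline{f}\in L^2(\mu)$, so $G_{-}\in\mathcal{H}(b)$ with $G_{-}^{\star}=\overline{f}$. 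Then (iii) follows since $\overline{G_{+}^{\star}}=\overline{f}=G_{-}^{\star}$.

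For sufficiency, I would first run Lemma \ref{L:interpolate} in the reverse direction. From $G_{+}\in\mathcal{H}(b)$ I obtain $f_{+}:=G_{+}^{\star}\in L^2(\mu)$ with $\widehat{f_{+}}(n)=F(n)$ for $n\ge 0$, and from $G_{-}\in\mathcal{H}(b)$ I obtain $f_{-}:=G_{-}^{\star}\in L^2(\mu)$ with $\widehat{f_{-}}(n)=\overline{F(-n)}$ for $n\ge 0$. Hypothesis (iii) says $f_{-}=\overline{f_{+}}$, so that $\widehat{f_{-}}(n)=\widehat{\overline{f_{+}}}(n)=\overline{\widehat{f_{+}}(-n)}$, i.e. $F(-n)=\widehat{f_{+}}(-n)$ for $n\ge 0$. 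Combining the two ranges, $F$ and the entire function $\widehat{f_{+}}$ agree on the whole integer lattice $\mathbb{Z}$.

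It remains to promote this to $F\equiv\widehat{f_{+}}$, and this is the step I expect to be the main obstacle. Set $H:=F-\widehat{f_{+}}$. By (i) and Lemma \ref{L:exponent}, $H$ is entire with $|H(z)|\le\varepsilon(|z|)e^{\pi|z|}$, $\varepsilon(r)=o(1)$, and $H$ vanishes on all of $\mathbb{Z}$. Unlike the setting of Theorem \ref{Th:type-eps}, Carlson's theorem does not apply directly: $H$ has extremal exponential type exactly $\pi$, and $\sin(\pi z)$ is a nonzero entire function of type $\pi$ vanishing on $\mathbb{Z}$, so the integer lattice is not a uniqueness set at this type. The plan is to write $H(z)=\sin(\pi z)\,G(z)$ with $G$ entire, to observe that on the imaginary axis $|\sin(\pi i y)|=\sinh(\pi|y|)\sim\tfrac12 e^{\pi|y|}$ forces $G(iy)=o(1)$, and then to argue that $G$ must be constant, so that the decay $G(iy)\to 0$ yields $G\equiv 0$. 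To run this I would use that $\widehat{f_{+}}$ is bounded on $\mathbb{R}$, since $|\widehat{f_{+}}(x)|\le\|f_{+}\|_{L^1(\mu)}\le\|f_{+}\|_{L^2(\mu)}$ as $\mu$ is a probability measure, and combine it with a Phragm\'en--Lindel\"of argument to control $H$ on the real axis, placing $H$ in the Bernstein class of type $\pi$; a bounded function of type $\pi$ vanishing on $\mathbb{Z}$ is a constant multiple of $\sin(\pi z)$, and the $o(1)$ refinement in (i), read off on the imaginary axis, forces that constant to vanish. The delicate point---and the crux of the extremal case---is precisely obtaining this real-axis control of $H$ (equivalently, showing that the growth hypothesis (i) together with the interpolation data is incompatible with a surviving $\sin(\pi z)$ term), which is exactly what separates Theorem \ref{Th:typepi} from the subcritical Theorem \ref{Th:type-eps}.
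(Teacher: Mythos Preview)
Your argument tracks the paper's proof almost exactly: necessity via Lemma~\ref{L:exponent} and Lemma~\ref{L:interpolate}, sufficiency by applying Lemma~\ref{L:interpolate} to both $\{F(n)\}_{n\ge 0}$ and $\{\overline{F(-n)}\}_{n\ge 0}$, using (iii) to identify $f_{-}=\overline{f_{+}}$ so that $F$ and $\widehat{f_{+}}$ agree on all of $\mathbb{Z}$, and then a uniqueness step. The only divergence is in that last step. The paper does not attempt to obtain real-axis boundedness of $H:=F-\widehat{f_{+}}$; it simply observes that both $F$ and $\widehat{f_{+}}$ satisfy the estimate $|G(z)|\le\varepsilon(|z|)e^{\pi|z|}$ with $\varepsilon(r)=o(1)$ (the first by hypothesis~(i), the second by Lemma~\ref{L:exponent}) and invokes the sharpened form of Carlson's theorem in \cite[Corollary~9.4.4]{Boas54a}, which asserts directly that an entire function with this $o(e^{\pi|z|})$ growth vanishing on all of $\mathbb{Z}$ is identically zero.

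Your proposed detour through the Bernstein class is where the gap lies, and you have correctly located it. The Phragm\'en--Lindel\"of step you sketch cannot be carried out as stated: hypothesis~(i) gives no boundedness of $F$ on any ray (on $\mathbb{R}$ it only says $|F(x)|=o(e^{\pi|x|})$), so there is no pair of rays on which to run the argument, and knowing $F$ is bounded on the integer lattice does not by itself transfer to $\mathbb{R}$ without already placing $F$ in a class like $PW_\pi$. The point of the cited corollary is precisely that no real-axis control is needed: the $o(1)$ refinement in the global growth, read off on the imaginary axis, is enough by itself to kill the potential $\sin(\pi z)$ factor once the zeros at $\mathbb{Z}$ are divided out. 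So your factorization $H=\sin(\pi z)\,G$ and the observation $G(iy)\to 0$ are the right first moves; what closes the argument is showing $G$ has exponential type $0$ and then applying a Phragm\'en--Lindel\"of/Liouville argument in each half-plane to conclude $G$ is bounded (hence constant, hence zero) --- this is the content of \cite[Corollary~9.4.4]{Boas54a}, and citing it is cleaner than reproving it.
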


\begin{proof}
The necessity of Item (i) follows from Lemma \ref{L:exponent}; the necessity of Items (ii) and (iii) are routine.

For the converse, the issue again is when can the sequence $\{ F(n) \}_{n=-\infty}^{\infty}$ be interpolated by a function of the form $\hat{f}$ for some $f \in L^2(\mu)$.
If Item (ii) holds, then by Lemma \ref{L:interpolate}, there exist $f_{+},f_{-} \in L^2(\mu)$ such that for all $n \in \mathbb{N}_{0}$,
\begin{equation}
F(n) = \int_{\mathbb{T}} f_{+}(x) e^{- 2\pi i n x} \ d \mu (x); \qquad \overline{F(-n)} = \int_{\mathbb{T}} f_{-}(x) e^{- 2 \pi i n x} \ d \mu(x).
\end{equation}
Since $V_{\mu} f_{+} = G_{+}$ and $V_{\mu} f_{-} = G_{-}$, we have $f_{+} = G_{+}^{*}$ and $f_{-} = G_{-}^{*}$.  Therefore, if in addition Item (iii) holds, then we have for all $n \in \mathbb{Z}$
\begin{equation}
F(n) = \int_{\mathbb{T}} f_{+}(x) e^{-2 \pi i n x} \ d \mu(x).
\end{equation}
Consequently, if Item (i) holds, $F$ and $\hat{f}_{+}$ are both entire functions satisfying the same estimate by Lemma \ref{L:exponent}.  Therefore, Carlson's Theorem in the form given in \cite[Corollary 9.4.4]{Boas54a} guarantees that $F = \hat{f}$.
\end{proof}

\section{A No-Go Result}

The power and beauty of the Paley-Wiener theorem is that the Paley-Wiener space is the intersection of two simple collections:  entire functions of exponential type at most $\pi$ and $L^2(\mathbb{R})$.  In general, this simplicity cannot be replicated.  There can be no integrability characterization for entire functions $F$ that admit a representation as in Equation (\ref{Eq:PW2}), at least in the sense we make precise presently.  We note that in \cite{Str93a} (see also \cite{Str90a,Str93b}), Strichartz demonstrated that for measures $\mu$ which are ``uniformly $\beta$-dimensional'', the function $\hat{f}$ satisfies the following integrability/Plancherel identity condition:
\begin{equation}
\limsup_{R \to \infty} \dfrac{1}{R^{1-\beta}} \int_{-R}^{R} | \hat{f}(t) |^2 dt \simeq \| f \|_{\mu}^{2}.
\end{equation} 
However, this condition does not characterize the functions $\hat{f}$.

For our purposes here, let us denote $PW(\mu) = \{ \hat{f} | f \in L^2(\mu) \}$, and let us denote by $\mathcal{C}_{\tau}$ the collection of all entire functions of exponential type at most $\tau$ and bounded on $\mathbb{R}$ ($0< \tau \leq \pi$).  By a weight $w$ on $\mathbb{R}$ we mean a nonnegative measurable function; we denote $L^2(w) := \{ f | \int_{\mathbb{R}} | f(x) |^2 w(x) dx < \infty \}$.

\begin{theorem} \label{Th:dB}
Suppose $PW(\mu) = \mathcal{C}_{\tau} \cap L^2(w)$ for some $\tau \in (0,\pi]$ and some weight or measure $w$ on $\mathbb{R}$ with $\| f \|_\mu \simeq \| \hat{f} \|_{w}$.  Then there exists a Riesz basis of the form 
\begin{equation} \label{Eq:RB}
\{ \omega_{n} e^{2 \pi i \lambda_{n} x} \}_{n \in \mathbb{Z}} \subset L^2(\mu)
\end{equation}
for some sequence $\{ \lambda_{n} \} \subset \mathbb{R}$ and $\omega_{n} > 0$. % More generally, if $PW(\mu) \subset L^2(w)$ is a closed subspace, then there exists a frame of the form (\ref{Eq:RB}).
\end{theorem}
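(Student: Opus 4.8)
The plan is to realize $PW(\mu)$, equipped with the norm $\|\cdot\|_w$, as a de Branges space, and to import into it an orthogonal basis of reproducing kernels that corresponds, after transport, to the desired exponentials. First I would record the two Hilbert-space structures living on the single vector space $H := PW(\mu)$. The Fourier transform $T : L^2(\mu) \to H$, $Tf = \hat f$, is a linear bijection, and the hypothesis $\|f\|_\mu \simeq \|\hat f\|_w$ says exactly that $T$ is bounded with bounded inverse onto $(H,\|\cdot\|_w)$. Transporting the $L^2(\mu)$ inner product through $T$ gives a second, equivalent Hilbert norm $\|\cdot\|_*$ on $H$ for which $T$ is unitary. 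For every $\lambda$, the evaluation $F \mapsto F(\lambda)$ is represented in the $*$-structure by $\widehat{e_\lambda}(z) = \widehat\mu(z-\lambda)$, where $e_\lambda(x)=e^{2\pi i\lambda x}\in L^2(\mu)$: indeed $\langle F,\widehat{e_\lambda}\rangle_* = \langle T^{-1}F, e_\lambda\rangle_\mu = \widehat{T^{-1}F}(\lambda) = F(\lambda)$. So the reproducing kernel of $(H,*)$ at $\lambda$ is $\widehat{e_\lambda}$, and $T^{-1}\widehat{e_\lambda}=e_\lambda$. This reduces the theorem to producing real points $\lambda_n$ for which $\{\widehat{e_{\lambda_n}}\}$ is a Riesz basis of $(H,*)$: pulling back by the unitary $T$ then gives a Riesz basis $\{e^{2\pi i\lambda_n x}\}$ of $L^2(\mu)$.

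Next I would verify that $(H,\|\cdot\|_w)$ satisfies the three axioms characterizing de Branges spaces. Completeness, and continuity of evaluation at every non-real point, follow from $\|\cdot\|_w \simeq \|\cdot\|_\mu$ together with $e_z\in L^2(\mu)$ for all $z$. The two isometry axioms are where the form of the norm pays off: on $\mathbb R$ one has $|F^{\#}(t)|=|F(t)|$ and $\bigl|\tfrac{t-\bar w}{t-w}\bigr|=1$, so both $F\mapsto F^{\#}$ and multiplication by an elementary Blaschke factor preserve $\int|F|^2\,dw$, while $\mathcal C_\tau$ is stable under both operations. (These computations are insensitive to whether $w$ is a weight or a measure, since everything is evaluated on $\mathbb R$.) Hence $(H,\|\cdot\|_w)$ is isometrically a de Branges space $\mathcal H(E)$ for some Hermite--Biehler function $E$ of exponential type $\tau$.

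I would then invoke de Branges' theorem on orthogonal bases of reproducing kernels: for all but at most one phase $\alpha$ modulo $\pi$ (the exception being governed by whether $e^{i\alpha}E-e^{-i\alpha}E^{*}\in\mathcal H(E)$), the reproducing kernels $\{K^w_{\lambda_n}\}$ at the real points $\lambda_n$ where the phase of $E$ is congruent to $\alpha \pmod{\pi}$ form an orthogonal basis of $\mathcal H(E)$. The final move is to carry this basis from the $w$-structure to the $*$-structure. The bridge is the dual, whose Riesz bases do not see the choice of equivalent inner product. Writing $\omega_n=\|K^w_{\lambda_n}\|_w^{-1}$, the normalized kernels $\{\omega_n K^w_{\lambda_n}\}$ form an orthonormal basis of $(H,w)$, so the functionals $\{\omega_n\delta_{\lambda_n}\}$ form a Riesz basis of $H^{*}$; since the Riesz-basis property of a sequence in $H^{*}$ is preserved under passage to an equivalent Hilbert norm, $\{\omega_n\delta_{\lambda_n}\}$ is also a Riesz basis of $H^{*}$ in the $*$-dual norm, and its $*$-representers $\{\omega_n\widehat{e_{\lambda_n}}\}$ form a Riesz basis of $(H,*)$. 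Applying $T^{-1}$ yields the Riesz basis $\{\omega_n e^{2\pi i\lambda_n x}\}$ of $L^2(\mu)$ with $\omega_n>0$ and $\lambda_n\in\mathbb R$.

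The step I expect to be the main obstacle is this last transfer between the two inner products, and in particular the bookkeeping that the Riesz-basis property is preserved (i) under an equivalent Hilbert norm and (ii) under the conjugate-linear Riesz isometry between a space and its dual. Neither fact is deep, but one must be careful that de Branges' orthogonality lives in $\|\cdot\|_w$ whereas the exponentials live in $\|\cdot\|_\mu=\|\cdot\|_*$, so that it is only the evaluation functionals $\delta_{\lambda_n}$ — manifestly independent of the choice of equivalent inner product — that may legitimately pass between the two pictures. A secondary point requiring care is selecting the phase $\alpha$ so that the non-membership hypothesis $e^{i\alpha}E-e^{-i\alpha}E^{*}\notin\mathcal H(E)$ holds, which is available because it can fail for at most one $\alpha$ modulo $\pi$. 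When $\mu$ has finite support the space is finite-dimensional and the conclusion is immediate, so one may assume $\dim L^2(\mu)=\infty$.
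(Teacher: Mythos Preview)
Your proposal is correct and follows essentially the same route as the paper: verify that $(PW(\mu),\|\cdot\|_w)$ satisfies the de Branges axioms, invoke de Branges' theorem to obtain an orthogonal basis of reproducing kernels $\{K(\lambda_n,\cdot)\}$ at real points, and then use the norm equivalence $\|f\|_\mu\simeq\|\hat f\|_w$ to transport this to a Riesz basis of weighted exponentials in $L^2(\mu)$. The only cosmetic difference is in the transfer step: you phrase it via the norm-independence of the evaluation functionals $\delta_{\lambda_n}$ in $H^*$, while the paper simply combines the sampling identity $\|\hat f\|_w^2=\sum_n|\hat f(\lambda_n)|^2/K(\lambda_n,\lambda_n)$ with $\hat f(\lambda_n)=\langle f,e_{\lambda_n}\rangle_\mu$ and the completeness of the kernels.
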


\begin{proof}
If $PW(\mu) = \mathcal{C}_{\tau} \cap L^2(w)$, then $PW(\mu)$ endowed with the $\| \cdot \|_{w}$ norm is a de Branges space.  Indeed, by \cite[Theorem 23]{dB68a}, we need to verify the conditions identified as (H1), (H2), and (H3).  Clearly $\mathcal{C}_{\tau} \cap L^2(w)$ satisfies (H1) and (H3).  For (H2), note that if $w \in \mathbb{C}$ and $F \in PW(\mu)$, then
\[ | F(w) | \leq \| f \|_{\mu} \| e^{2 \pi i w \cdot} \|_{\mu} \lesssim \| F \|_{w}. \]

Consequently, by \cite[Theorem 22]{dB68a} there exists a sequence $\{ \lambda_{n} \} \subset \mathbb{R}$ such that for all $f \in L^2(\mu)$,
\begin{equation} \label{Eq:sample}
\| \hat{f} \|^{2}_{w} = \sum_{n \in \mathbb{Z}} \dfrac{ | \hat{f}(\lambda_{n}) |^2 }{ K(\lambda_{n}, \lambda_{n}) } \simeq \| f \|^{2}_{\mu}.
\end{equation}
Here $K$ is the reproducing kernel for the space; the sequence of kernels $\{ K(\lambda_{n}, \cdot) \}$ form a complete orthogonal set in the space.  This combined with Equation (\ref{Eq:sample}) proves the claim, where $\omega_{n} = K(\lambda_{n}, \lambda_{n})^{-1/2}$.
\end{proof}

Measures which possess a Riesz basis or frame of exponentials are rare (see e.g. \cite{DHSW11,DL14a}).  The following was demonstrated in \cite{DHW14}:  if $PW(\mu) \subset L^2(w)$ is a closed subspace, then there exists a frame of the form in (\ref{Eq:RB}).

%\section{Concluding Remarks}

\begin{remark*} We point out that our techniques in the present paper are quite similar to the alternative proof of the Paley-Wiener theorem given in \cite[pg. 106]{Boas54a}.
\end{remark*}

\begin{acknowledgment*}
The results of this paper were inspired while the author attended the workshop ``Hilbert Spaces of Entire Functions and their Applications" at the Institute of Mathematics, Polish Academy of Sciences (IM PAN).  The author thanks the organizers of the workshop for the invitation to participate and IM PAN for their hospitality.
\end{acknowledgment*}

%%%%%%%%%%%%%%
% Bibliography
% \input{biblio.tex}
%\nocite{*}

\providecommand{\bysame}{\leavevmode\hbox to3em{\hrulefill}\thinspace}
\providecommand{\MR}{\relax\ifhmode\unskip\space\fi MR }
% \MRhref is called by the amsart/book/proc definition of \MR.
\providecommand{\MRhref}[2]{%
  \href{http://www.ams.org/mathscinet-getitem?mr=#1}{#2}
}
\providecommand{\href}[2]{#2}

\end{document}